\newtheorem{thm}{Theorem}[section]
\newtheorem{lem}[thm]{Lemma}
\numberwithin{equation}{section}
\renewcommand{\thefootnote}
\begin{document}

\begin{center}
{\large\bf On some conjectural series containing harmonic numbers
\\of 3-order
 \footnote{Corresponding author$^*$. Email addresses: weichuanan78@163.com (C. Wei),
 cexu2020@ahnu.edu.cn (C. Xu).}}
\end{center}

\renewcommand{\thefootnote}{$\dagger$}

\vskip 2mm \centerline{Chuanan Wei$^1$, Ce Xu$^{2*}$}
\begin{center}
{$^1$School of Biomedical Information and Engineering\\
  Hainan Medical University, Haikou 571199, China\\
  $^2$School of Mathematics and Statistics\\
Anhui Normal University, Wuhu 241002, China}
\end{center}


\vskip 0.7cm \noindent{\bf Abstract.} Harmonic numbers are important
in a lot of branches of number theory. By means of the derivative
operator, the integral operator, and several summation and
transformation formulas for hypergeometric series, we prove four
series containing harmonic numbers of 3-order. Three of them are
conjectures which were recently proposed by Z.-W. Sun.

\vskip 3mm \noindent {\it Keywords}: hypergeometric series;
polygamma function;
 harmonic numbers of 3-order

 \vskip 0.2cm \noindent{\it AMS
Subject Classifications:} 33D15; 05A15

\section{Introduction}

For $\ell,n\in \mathbb{Z}^{+}$, define generalized harmonic numbers
of $\ell$-order as
\[H_{n}^{(\ell)}(x)=\sum_{k=1}^n\frac{1}{(x+k)^{\ell}}.\]
The $x=0$ case of them are
 harmonic numbers of $\ell$-order
\[H_{n}^{(\ell)}=\sum_{k=1}^n\frac{1}{k^{\ell}}.\]
When $\ell=1$, they reduce to classical harmonic numbers:
\[
H_{n}=\sum_{k=1}^n\frac{1}{k}.\] For a nonnegative integer $m$,
define the shifted-factorial to be
\begin{align*}
(x)_0=1 \quad\text{and}\quad (x)_m=x(x+1)\cdots(x+m-1) \quad
\text{when} \quad m\in \mathbb{Z}^{+}.
\end{align*}
 For a differentiable function $f(x)$, define the derivative operator
$\mathcal{D}_x$ by
\begin{align*}
\mathcal{D}_xf(x)=\frac{d}{dx}f(x)=\lim_{\bigtriangleup
x\to0}\frac{f(x+\bigtriangleup x)-f(x)}{\bigtriangleup x}.
 \end{align*}
 Then it is ordinary to show that
\begin{align*}
\mathcal{D}_x\:(1+x)_r=(1+x)_rH_r(x).
 \end{align*}
Several nice harmonic number identities from differentiation of the
shifted-factorials can be viewed in the papers
\cite{Liu,Paule,Sofo,Wang-Wei}.

Define the digamma function $\psi(x)$ as
\begin{align*}
\psi(x)=\frac{d}{dx}\big\{\log\Gamma(x)\big\},
\end{align*}
where $\Gamma(x)$ is the familiar gamma function. Furthermore, we
can define the polygamma function $\psi_n(x)$ to be
\begin{align*}
\psi_n(x)=\frac{d\,^{n+1}}{d\,x^{n+1}}\big\{\log\Gamma(x)\big\}=\frac{d\,^{n}}{d\,x^{n}}\psi(x).
 \end{align*}
For $s\in \mathbb{Z}^{+}$, a famous relation can be written as
\begin{align}
&\psi_s(z)=(-1)^{s+1}(1)_s\sum_{k=0}^{\infty}\frac{1}{(z+k)^{s+1}}.
 \label{relation}
\end{align}
 Three related special values of $\psi_2(x)$ (cf. \cite{Weisstein})
read
\begin{align}
&\psi_2(1)=-2\zeta_3,
 \label{polygamma-a}\\[2mm]
&\psi_2(\tfrac{1}{2})=-14\zeta_3,
 \label{polygamma-b}
\\[2mm]
&\psi_2(\tfrac{3}{4})=2\pi^3-56\zeta_3.
\label{polygamma-c}
\end{align}

There exist many interesting $\pi$-formulas in the literature. A
simple series for $1/\pi$ due to Bauer \cite{Bauer} and two fast
convergent series for $1/\pi^2$ due to Guillera
\cite{Guillera-a,Guillera-c} can be laid out as follows:
\begin{align}
&\qquad\sum_{k=0}^{\infty}(-1)^k(4k+1)\frac{(\frac{1}{2})_k^3}{(1)_k^3}
=\frac{2}{\pi},
\label{Bauer}\\[1mm]
&\quad\sum_{k=0}^{\infty}(20k^2+8k+1)\frac{\binom{2k}{k}^5}{(-2^{12})^k}=\frac{8}{\pi^2},
\label{Guillera-a}\\[1mm]
&\sum_{k=0}^{\infty}(820k^2+180k+13)\frac{\binom{2k}{k}^5}{(-2^{20})^k}=\frac{128}{\pi^2}.
\label{Guillera-b}
\end{align}
 In 2017,  He \cite{He} discovered the following
formula: for any prime $p\equiv 1\pmod 4$,
\begin{align}
&\sum_{k=0}^{(p-1)/2}\frac{6k+1}{4^k}\frac{(\frac{1}{2})_k^3(\frac{1}{4})_k}{(1)_k^4}\equiv
(-1)^{\frac{p+3}{4}}p\Gamma_p(\tfrac{1}{2})\Gamma_p(\tfrac{1}{4})^2
\pmod{p^2}, \label{He}
\end{align}
where $\Gamma_p(x)$ denotes the $p$-adic gamma function. For some
conclusions similar to \eqref{Bauer}-\eqref{He}, the reader is
referred to
 the papers \cite{Au,Guo,Guo-a,Liu-a,Liu-b,Liu-c,Mao,Sun-11,Sun-a,Wang,Wang-Sun}.

Recently, Sun \cite[Equation (3.50)]{Sun-c} proposed the following
conjecture containing harmonic numbers of 3-order associated with
\eqref{Bauer}.

\begin{thm}\label{thm-a}
\begin{align}
\sum_{k=1}^{\infty}(-1)^k(4k+1)\frac{(\frac{1}{2})_k^3}{(1)_k^3}H_{2k}^{(3)}=\frac{15\zeta(3)}{4\pi}-2G,
 \label{equation-wei-a}
\end{align}
where $\zeta(3)$ stands for
$$\zeta(3)=\sum_{k=0}^{\infty}\frac{1}{k^3}$$
and $G$ is the Catalan constant
$$G=\sum_{k=1}^{\infty}\frac{(-1)^{k-1}}{(2k-1)^2}.$$
\end{thm}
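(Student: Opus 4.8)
The plan is to embed Bauer's formula \eqref{Bauer} into a one‑parameter family and then differentiate in that parameter. First I would recognise the Bauer sum as a very‑well‑poised hypergeometric series at argument $-1$: since $4k+1=(\tfrac54)_k/(\tfrac14)_k$ and $k!=(1)_k$, the summand $(-1)^k(4k+1)\frac{(\frac12)_k^3}{(1)_k^3}$ is exactly the $a=b=c=\tfrac12$ instance of a very‑well‑poised ${}_4F_3(-1)$ with principal parameter $a$ (the pair $1+\tfrac a2,\tfrac a2$ supplying the $(\tfrac54)_k/(\tfrac14)_k$ factor). The first task is therefore to build its deformation
$$F(x)=\sum_{k=0}^{\infty}(-1)^k(4k+1)\,\frac{(\tfrac12)_k^{3}}{(1)_k^{3}}\,R_k(x),\qquad F(0)=\frac{2}{\pi},$$
where $R_k(x)$ is a ratio of shifted factorials that keeps the series well‑poised, so that a Whipple/Dougall‑type summation applies and $F(x)$ equals an explicit quotient of gamma functions whose arguments involve $\tfrac14,\tfrac12,\tfrac34$.

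The second step is to manufacture $H_{2k}^{(3)}$ by differentiation. From $\mathcal{D}_x(1+x)_r=(1+x)_rH_r(x)$ one gets $\mathcal{D}_x H_r(x)=-H_r^{(2)}(x)$ and $\mathcal{D}_x^{2}H_r(x)=2H_r^{(3)}(x)$, so an order‑three harmonic number is produced by a third $x$‑derivative of a factor $(1+x)_r$ with $r=2k$; the index $2k$ is reached through the duplication $(\tfrac12)_k=(1)_{2k}/(4^k(1)_k)$, which connects the summand to $\Gamma(2k+1+x)$. Equivalently I would use $H_{2k}^{(3)}=\zeta(3)+\tfrac12\psi_2(2k+1)$ together with the polygamma relation \eqref{relation} at $s=2$. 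The deformation $R_k(x)$ must then be chosen so that the relevant (third‑order, hence odd) $x$‑derivative of $F$ at $x=0$ reproduces $\sum_k(-1)^k(4k+1)\frac{(\frac12)_k^3}{(1)_k^3}H_{2k}^{(3)}$, while the unavoidable companions of order one and two either cancel by the parity of $R_k$ or reduce to $F(0)$ and its lower derivatives, i.e. to Bauer's value and known $\pi$‑ and $G$‑constants.

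The third step is to differentiate the gamma‑quotient closed form the same number of times and evaluate at $x=0$. This expresses the answer through $\psi,\psi_1,\psi_2$ at the points $1,\tfrac12,\tfrac14,\tfrac34$; substituting \eqref{polygamma-a}–\eqref{polygamma-c} brings in $\zeta(3)$ through the cubic values, while the quadratic value $\psi_1(\tfrac14)=\pi^2+8G$ brings in the Catalan constant $G$, which is exactly why the final answer mixes a $\zeta(3)/\pi$ term with a $\pi$‑free $2G$ term. Finally I would restore the missing $k=0$ term, since the stated sum begins at $k=1$ and the $k=0$ contribution corresponds to $\psi_2(1)=-2\zeta(3)$ via the identity above, and collect everything to reach $\tfrac{15\zeta(3)}{4\pi}-2G$. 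As an alternative for generating $H_{2k}^{(3)}$, one may instead use the integral operator through $\frac{1}{j^{3}}=\tfrac12\int_0^1 x^{j-1}(\log x)^2\,dx$, which turns the sum into an integral of $(\log x)^2/(1-x)$ against the Bauer‑type generating function $\sum_k(4k+1)\frac{(\frac12)_k^3}{(1)_k^3}(-x^2)^k$, itself reducible by Clausen's formula to a square of ${}_2F_1[\tfrac14,\tfrac14;1;\cdot]$.

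The hard part will be Step~1 interacting with Step~2: finding the precise well‑poised deformation $R_k(x)$ whose third‑order $x$‑derivative isolates $H_{2k}^{(3)}$ at the index $2k$ (rather than $H_k^{(3)}$) while keeping the series summable in closed form is delicate, because a naive symmetric deformation is even in $x$ and produces only order‑two (Catalan) data. Equally laborious is the bookkeeping in Step~3: the clean collapse leaving only $\zeta(3)/\pi$ and $G$ relies on using \eqref{Bauer} and its first two $x$‑derivatives to dispose of every polygamma term that is neither cubic nor quadratic in nature.
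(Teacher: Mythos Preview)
Your plan shares the paper's philosophy of deforming the Bauer series and differentiating, but it diverges at the crucial point and carries a real gap. You assume that the deformed series $F(x)$ closes up as a gamma quotient, so that three $x$-derivatives at $x=0$ produce $H_{2k}^{(3)}$ on the left and polygamma values (including $\psi_1(\tfrac14)=\pi^2+8G$) on the right. The paper does not do this, and for good reason: within the very-well-poised ${}_4F_3(-1)$ framework the free numerator parameters sit at index $k$, not $2k$, so straightforward differentiation yields $H_k^{(\ell)}$-type combinations; isolating a clean $H_{2k}^{(3)}$ from a gamma-quotient identity alone is exactly the ``delicate'' step you flag, and you give no candidate $R_k(x)$ that achieves it.

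The paper's route is substantively different. It starts from Whipple's \emph{transformation} of a terminating very-well-poised ${}_7F_6$ into a ${}_4F_3$ (equation~\eqref{Whipple}), not a summation to a gamma product. After specialising, applying $\mathcal{D}_c$ and then $\mathcal{D}_d$, and taking $d\to1$ via L'H\^opital, one obtains the finite identity
\[
\sum_{k=1}^{n}(4k+1)\frac{(\tfrac12)_k^3(-n)_k}{(1)_k^3(\tfrac32+n)_k}H_{2k}^{(3)}
=\frac{(\tfrac32)_n(\tfrac12)_n}{8(1)_n^2}
\Bigl\{H_n^{(3)}-\sum_{k=1}^{n}\frac{(\tfrac12)_k(-n)_k}{k(1)_k(\tfrac12-n)_k}\bigl[4H_{2k}^{(2)}-H_k^{(2)}\bigr]\Bigr\}.
\]
Letting $n\to\infty$ leaves, on the right, the auxiliary series $\sum_{k\ge1}\frac{(\frac12)_k}{k(1)_k}\bigl\{4H_{2k}^{(2)}-H_k^{(2)}\bigr\}$, which is \emph{not} a polygamma value but is evaluated separately as $8\pi G-14\zeta(3)$ in Lemma~\ref{lemm-a} by integrating Sun's closed form $\frac{4(\arcsin\frac{x}{2})^2}{\sqrt{4-x^2}}=\sum_{k\ge1}(\tfrac{x}{2})^{2k}\frac{(\frac12)_k}{(1)_k}\bigl\{4H_{2k}^{(2)}-H_k^{(2)}\bigr\}$ against $dx/x$ and then computing $\int_0^{\pi/2}\frac{4t^2}{\sin t}\,dt$ via the Fourier expansion of $\log\tan(t/2)$. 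Thus in the paper the Catalan constant enters through this integral, not through $\psi_1(\tfrac14)$. Your outline contains neither the transformation step nor any analogue of Lemma~\ref{lemm-a}, and without one of these ingredients the argument does not close.
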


Similarly, we shall establish the following series  containing
harmonic numbers of 3-order related to \eqref{He}.

\begin{thm}\label{thm-b}
\begin{align}
\sum_{k=1}^{\infty}\frac{6k+1}{4^k}\frac{(\frac{1}{2})_k^3(\frac{1}{4})_k}{(1)_k^4}\Big\{64H_{2k}^{(3)}-7H_{k}^{(3)}\Big\}
=\frac{\sqrt{2}\,\Gamma(\frac{1}{4})^2}{\pi^{5/2}}\Big\{29\zeta(3)-\pi^3\Big\}.
 \label{equation-wei-b}
\end{align}
\end{thm}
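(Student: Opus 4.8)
The plan is to evaluate \eqref{equation-wei-b} by the parametric-differentiation method: to realize the harmonic-number weight as a third derivative, in an auxiliary parameter, of a Ramanujan-type (``seed'') series whose value is a quotient of gamma functions, namely the analytic companion of He's congruence \eqref{He}. Writing $w_k=\frac{(\frac12)_k^3(\frac14)_k}{(1)_k^4}$, the first step is an algebraic reduction of the bracket. Using \eqref{relation} in the form $H_n^{(3)}=\tfrac12[\psi_2(n+1)-\psi_2(1)]$ together with the polygamma duplication $\psi_2(2z)=\tfrac18[\psi_2(z)+\psi_2(z+\tfrac12)]$ (applied at $z=k+\tfrac12$), one obtains the key identity
\[
64H_{2k}^{(3)}-7H_k^{(3)}=4\,\psi_2\bigl(k+\tfrac12\bigr)+\tfrac12\,\psi_2(k+1)+57\,\zeta(3).
\]
Hence the sum in \eqref{equation-wei-b} splits as $4\,T_{1/2}+\tfrac12\,T_{1}+57\,\zeta(3)\,B$, where $T_c=\sum_{k\ge1}\frac{6k+1}{4^k}w_k\,\psi_2(k+c)$ and $B=\sum_{k\ge1}\frac{6k+1}{4^k}w_k$ is the base Ramanujan-type series (differing from its $k\ge0$ completion by the term $1$). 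This already explains the coefficients $64$ and $-7$: they are precisely what aligns the $2k$-indexed $H_{2k}^{(3)}$, which enters because $(\tfrac12)_k=(1)_{2k}/(4^k(1)_k)$, with the half-integer-shifted $\psi_2(k+\tfrac12)$ that arises from perturbing the three $(\tfrac12)_k$-factors.

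The second step is to produce $T_{1/2}$, $T_1$ and $B$ from a single parametric evaluation. Here I would insert a free $b$ and look for a seed of the shape
\[
\sum_{k\ge0}\frac{A(k;b)}{4^k}\,\frac{(\tfrac12+\alpha b)_k^{3}\,(\tfrac14+\beta b)_k}{(1+\gamma b)_k^{4}}=\Omega(b),
\]
with $A(k;0)=6k+1$ and $\Omega(0)\propto \frac{\sqrt2\,\Gamma(1/4)^2}{\pi^{5/2}}$, the evaluation coming from a well-poised summation or transformation formula. Differentiating the left side makes the harmonic numbers appear through $\mathcal{D}_x(1+x)_r=(1+x)_rH_r(x)$ and its higher analogues $\frac{d^2}{dx^2}\log(1+x)_r=-H_r^{(2)}(x)$ and $\frac{d^3}{dx^3}\log(1+x)_r=2H_r^{(3)}(x)$; the weights $(\alpha,\beta,\gamma)$ are to be tuned so that a single $\partial_b^{3}|_{b=0}$ reproduces the whole combination $4\,T_{1/2}+\tfrac12\,T_{1}+57\,\zeta(3)\,B$ at once, rather than evaluating the three pieces separately.

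On the evaluated side, $\partial_b^{3}\Omega|_{b=0}$ equals $\Omega(0)$ times a cubic expression in the digamma and polygamma values at the base points $1,\tfrac12,\tfrac34$ (and possibly $\tfrac14$). Substituting \eqref{polygamma-a}--\eqref{polygamma-c}, and reducing $\psi_2(\tfrac14)$ if it occurs via the same duplication formula and \eqref{polygamma-b}, the pure third-order part collapses through
\[
29\,\zeta(3)-\pi^3=-\tfrac12\bigl(\psi_2(\tfrac34)+\psi_2(1)\bigr),
\]
which, multiplied by the prefactor $\frac{\sqrt2\,\Gamma(1/4)^2}{\pi^{5/2}}$, yields exactly the right-hand side of Theorem~\ref{thm-b}.

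The hard part will be two intertwined issues: locating the correct parametric seed with its gamma-function evaluation, and guaranteeing that the first- and second-order derivative contributions (the $\psi_0^3$, $\psi_0\psi_1$ terms on the closed-form side, and the $H^{(1)}$-, $H^{(2)}$-weighted single sums on the series side) cancel, leaving only the clean $\psi_2$/$H^{(3)}$ combination. This cancellation is not automatic; it forces the seed to be well-poised, so that $\partial_b\Omega|_{0}$ and the attendant lower-order series vanish, and it is exactly where the summation and transformation formulas and the integral operator advertised in the abstract should enter, the integral operator presumably furnishing the base value $B$ and annihilating the residual lower-order sums. Finally, I would justify the termwise differentiation by noting that the ratio of successive summands tends to $\tfrac14$, so the series and all its parametric derivatives converge uniformly on a neighbourhood of $b=0$.
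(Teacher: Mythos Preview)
Your algebraic reductions are correct --- the identity $64H_{2k}^{(3)}-7H_k^{(3)}=4\psi_2(k+\tfrac12)+\tfrac12\psi_2(k+1)+57\zeta(3)$ checks out, as does $29\zeta(3)-\pi^3=-\tfrac12[\psi_2(\tfrac34)+\psi_2(1)]$ --- and the overall strategy (parametric differentiation of a hypergeometric seed, then reading off polygamma values) is exactly the mechanism the paper uses. But there is a genuine gap: you never identify the seed. You write ``I would look for a seed of the shape $\sum\frac{A(k;b)}{4^k}\frac{(\frac12+\alpha b)_k^3(\frac14+\beta b)_k}{(1+\gamma b)_k^4}=\Omega(b)$'' and hope that tuning $(\alpha,\beta,\gamma)$ will both (i) give a closed-form $\Omega(b)$ and (ii) make the first- and second-order debris cancel. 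Neither of these is guaranteed, and you yourself flag (ii) as ``the hard part'' without resolving it. As it stands this is a plan, not a proof.

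For comparison, the paper's seed is Gosper's terminating ${}_7F_6$ summation
\[
{}_7F_6\!\left[\begin{array}{c}
a-\tfrac12,\ \tfrac{2a+2}{3},\ 2b-1,\ 2c-1,\ 2+2a-2b-2c,\ a+n,\ -n\\[2pt]
\tfrac{2a-1}{3},\ 1+a-b,\ 1+a-c,\ b+c-\tfrac12,\ 2a+2n,\ -2n
\end{array};1\right]
=\frac{(a+\tfrac12)_n(b)_n(c)_n(a-b-c+\tfrac32)_n}{(\tfrac12)_n(1+a-b)_n(1+a-c)_n(b+c-\tfrac12)_n},
\]
which is not the ``single-parameter, take $\partial_b^3$'' shape you propose. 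Instead the paper uses two parameters: apply $\mathcal{D}_b$, divide by the factor $\tfrac32+a-2b-c$ (this division is what converts the $H_k$-differences into the $\sum\frac{1}{(\,)(\,)}$ form and kills one order), then apply $\mathcal{D}_c$, and finally specialize $a=b=c=\tfrac34$. At that specialization the mixed products collapse and one is left with exactly $64H_{2k}^{(3)}-7H_k^{(3)}$ on the left and $H_n^{(3)}+H_n^{(3)}(-\tfrac14)$ times a Pochhammer ratio on the right; letting $n\to\infty$ and invoking \eqref{polygamma-a}, \eqref{polygamma-c} finishes. The two-parameter-plus-division trick is precisely what handles your ``hard part'': rather than hoping a well-poised symmetry annihilates the $\psi_0^3$ and $\psi_0\psi_1$ contributions of a third derivative, the paper arranges things so those lower-order pieces never survive the specialization.

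Two smaller points. First, the integral operator you invoke from the abstract is a red herring here: in the paper it is used only for Lemma~\ref{lemm-a}, which feeds into Theorem~\ref{thm-a}, not Theorem~\ref{thm-b}. Second, the paper works with a terminating series throughout and only lets $n\to\infty$ at the end; your proposal to work with a convergent infinite seed from the start is viable in principle, but you would then need an explicit non-terminating summation, and the obvious candidates (Dougall's ${}_5F_4$, Whipple's ${}_7F_6$) do not directly produce the $(\tfrac14)_k$ factor with the required $4^{-k}$.
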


Very recently,  Sun \cite[Equations (4.7) and (4.8)]{Sun-d} proposed
the following  two conjectures containing harmonic numbers of
3-order associated with \eqref{Guillera-a} and \eqref{Guillera-b}.

\begin{thm}\label{thm-c}
\begin{align}
\sum_{k=0}^{\infty}\bigg(\frac{-1}{4}\bigg)^k\frac{(\frac{1}{2})_k^5}{(1)_k^5}\bigg\{(20k^2+8k+1)H_{k}^{(3)}+\frac{8}{2k+1}\bigg\}=\frac{64\zeta(3)}{\pi^2}.
 \label{equation-wei-c}
\end{align}
\end{thm}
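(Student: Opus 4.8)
Write $w_k=\bigl(\tfrac{-1}{4}\bigr)^k\frac{(\frac12)_k^5}{(1)_k^5}$. Since $\frac{(\frac12)_k}{(1)_k}=\binom{2k}{k}/4^k$, one has $w_k=\binom{2k}{k}^5/(-2^{12})^k$, so that $\sum_{k\ge0}(20k^2+8k+1)w_k=\frac{8}{\pi^2}$ is precisely Guillera's evaluation \eqref{Guillera-a}. My plan is to obtain \eqref{equation-wei-c} as the coefficient of $x^3$ in a suitable one-parameter deformation of \eqref{Guillera-a}, the cubic harmonic number $H_k^{(3)}$ being produced by the derivative operator $\mathcal D_x$ acting on the lower parameters of the summand.

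A preliminary reduction clarifies the target and the role of the rational tail $\tfrac{8}{2k+1}$. By the case $s=2$ of \eqref{relation}, $\psi_2(k+1)=-2\sum_{m\ge0}(k+1+m)^{-3}$, whence $H_k^{(3)}=\zeta(3)+\tfrac12\psi_2(k+1)$ (consistent at $k=0$ with \eqref{polygamma-a}). Substituting this and peeling off the $\zeta(3)$-part through \eqref{Guillera-a} turns \eqref{equation-wei-c} into the equivalent statement
\[
\sum_{k=0}^{\infty}w_k\Bigl\{\tfrac12(20k^2+8k+1)\,\psi_2(k+1)+\tfrac{8}{2k+1}\Bigr\}=\frac{56\zeta(3)}{\pi^2}.
\]
To manufacture $\psi_2(k+1)$ I would deform three of the five lower parameters, replacing $(1)_k^5$ by $(1)_k^2(1+x)_k(1+\omega x)_k(1+\omega^2 x)_k$ with $\omega$ a primitive cube root of unity. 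Because $\mathcal D_x(1+x)_r=(1+x)_r H_r(x)$, the logarithm of the resulting ratio to the undeformed weight expands as $-(1+\omega+\omega^2)H_k\,x+\tfrac12(1+\omega^2+\omega^4)H_k^{(2)}x^2-\tfrac13(1+\omega^3+\omega^6)H_k^{(3)}x^3+\cdots$. The vanishing of the first two symmetric power sums removes the unwanted cross terms $H_kH_k^{(2)}$ and $H_k^3$ that a naive third derivative of a single Pochhammer ratio would generate, and since the logarithm then has no $x$- or $x^2$-term while $1+\omega^3+\omega^6=3$, the $x^3$-coefficient of the ratio itself is simply $-H_k^{(3)}$. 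Thus the third Taylor coefficient isolates $H_k^{(3)}$—equivalently $\psi_2(k+1)$—linearly, and the companion term $\tfrac{8}{2k+1}$ is to be produced from the $x$-dependence carried by the deformed polynomial, or through the integral operator $\tfrac{1}{2k+1}=\int_0^1 t^{2k}\,dt$ applied at the boundary of the identity.

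The decisive step, and the one I expect to be hardest, is to furnish the deformed series in closed form: I need a family whose summand collapses to $w_k(20k^2+8k+1)$ at $x=0$ and whose sum is an explicit product of gamma functions, for only then is its $x^3$-coefficient computable. This is where Guillera's series differs in character from \eqref{Bauer} and \eqref{He}: being of Ramanujan type, \eqref{Guillera-a} is not the specialization of a single classical summation theorem, so the parametric refinement must be assembled from quadratic and cubic transformations together with contiguous relations for hypergeometric series. Granting such a gamma-product evaluation, its third logarithmic derivative at $x=0$ is a combination of the values $\psi_2(1)$, $\psi_2(\tfrac12)$, $\psi_2(\tfrac34)$; inserting \eqref{polygamma-a}–\eqref{polygamma-c} collapses the right-hand side to $\frac{56\zeta(3)}{\pi^2}$, and by the reduction above this is exactly \eqref{equation-wei-c}.
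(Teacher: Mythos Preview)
Your reduction via $H_k^{(3)}=\zeta(3)+\tfrac12\psi_2(k+1)$ and Guillera's evaluation is correct, and the overall strategy---deform \eqref{Guillera-a} to a parametric identity with a gamma-product right-hand side, then differentiate---is exactly right in spirit. But the proposal has a genuine gap at precisely the point you yourself flag: you never exhibit the closed-form deformation, and everything after ``Granting such a gamma-product evaluation'' is conditional. The paper fills this gap concretely: it invokes a specific transformation of Chu and Zhang (their Theorem~9, recorded here as \eqref{equation-aa}) which, after setting $b=a$ and evaluating the resulting $_5F_4$ by Dougall's formula \eqref{eq:Dougall}, yields a four-parameter family whose left side specializes to the Guillera summand and whose right side is a ratio of gamma functions. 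Two successive real differentiations (in $c$, then in $d$), interspersed with a division by $d-2c$ and a limit $d\to1$ handled by L'H\^opital, isolate the $H_k^{(3)}$ contribution and simultaneously generate the tail $\tfrac{8}{2k+1}$ from the $k$-dependent polynomial $\beta_k$.

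Your cube-root-of-unity device is algebraically elegant for killing the cross terms $H_k^3$ and $H_kH_k^{(2)}$, but it almost certainly makes the closed-form step harder, not easier: deforming three lower parameters independently by $x,\omega x,\omega^2x$ destroys the well-poised pairing that every known evaluation of series of this shape relies on, so no gamma product is likely to survive. The paper avoids this by keeping the deformation inside the well-poised/balanced framework via the substitution $d\mapsto d-c$, $e\mapsto e-d$ in \eqref{equation-bb}, which lets two real derivatives do the work of your single cubic one. A minor point: for this theorem only $\psi_2(1)$ and $\psi_2(\tfrac12)$ enter on the right (see \eqref{polygamma-a}, \eqref{polygamma-b}); the value $\psi_2(\tfrac34)$ you anticipate is used for Theorem~\ref{thm-b}, not here.
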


\begin{thm}\label{thm-d}
\begin{align}
\sum_{k=0}^{\infty}\bigg(\frac{-1}{1024}\bigg)^k\frac{(\frac{1}{2})_k^5}{(1)_k^5}\bigg\{(820k^2+180k+13)\Big[9H_{2k}^{(3)}-H_{k}^{(3)}\Big]+\frac{125}{2k+1}\bigg\}
=\frac{1024\zeta(3)}{\pi^2}.
 \label{equation-wei-d}
\end{align}
\end{thm}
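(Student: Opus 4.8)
The plan is to obtain \eqref{equation-wei-d} as a third-order parametric derivative of a refined version of Guillera's series \eqref{Guillera-b}. First I would record the reductions that make the target recognizable. Writing $(\tfrac12)_k/(1)_k=\binom{2k}{k}/4^k$ turns the summand $\left(\frac{-1}{1024}\right)^k\frac{(\frac12)_k^5}{(1)_k^5}$ into $\binom{2k}{k}^5/(-2^{20})^k$, so the harmonic-free part of \eqref{equation-wei-d} is exactly \eqref{Guillera-b}. Next I would express the order-$3$ data through generalized harmonic numbers: since
\[
H_k^{(3)}(-\tfrac12)=\sum_{i=1}^k\frac{1}{(i-\tfrac12)^3}=8\sum_{i=1}^k\frac{1}{(2i-1)^3}=8H_{2k}^{(3)}-H_k^{(3)},
\]
the bracket satisfies $9H_{2k}^{(3)}-H_k^{(3)}=H_k^{(3)}(-\tfrac12)+H_{2k}^{(3)}$, a fixed linear combination of the order-$3$ harmonic numbers attached to the parameters $\tfrac12$ and $1$. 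This is precisely the shape produced by differentiating shifted factorials, since $\mathcal{D}_x^{3}\log(1+x)_k=2H_k^{(3)}(x)$ and $(\tfrac12+a)_k=(1+x)_k$ with $x=-\tfrac12+a$.

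The core step is to produce a one-parameter identity $\Phi(a)=\Psi(a)$, analytic near $a=0$, whose value at $a=0$ is \eqref{Guillera-b} and whose left side carries the perturbed factors $(\tfrac12+a)_k$ and $(1+a)_k$ in the appropriate multiplicities (together with an $a$-deformed quadratic coefficient), while the right side $\Psi(a)$ is a quotient of Gamma functions divided by $\pi^2$. I would obtain such a refinement from a very-well-poised hypergeometric transformation, or from a WZ certificate underlying \eqref{Guillera-b}, by combining the quoted summation and transformation formulas; this master identity is the principal technical input.

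With the master identity in hand, I would apply $\mathcal{D}_a^{3}$ and set $a=0$, justifying the interchange of differentiation and summation by the geometric decay from the ratio $-1/1024$. On the left, the pure third-order parts of $\mathcal{D}_a^{3}(\tfrac12+a)_k$ and $\mathcal{D}_a^{3}(1+a)_k$ contribute $H_k^{(3)}(-\tfrac12)$ and $H_k^{(3)}$, which by the relation above assemble into $9H_{2k}^{(3)}-H_k^{(3)}$ against the coefficient $820k^2+180k+13$; the lower-order cross terms $(H^{(1)})^3$, $H^{(1)}H^{(2)}$ and the first/second-order pieces are absorbed by the $a$-deformation of the quadratic and reorganized into the single rational term $125/(2k+1)$, using $1/(2k+1)=(\tfrac12)_k/(\tfrac32)_k=\int_0^1 t^{2k}\,dt$ and the integral operator to evaluate the residual constant. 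On the right, $\mathcal{D}_a^{3}\log\Gamma$ converts $\Psi(a)$ into a combination of $\psi_2(1)$ and $\psi_2(\tfrac12)$; substituting \eqref{polygamma-a} and \eqref{polygamma-b} cancels the $\pi^3$-contributions and leaves $1024\zeta(3)/\pi^2$.

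The main obstacle is twofold: (i) pinning down the correct parametric refinement of \eqref{Guillera-b}---the exact multiplicities of the perturbed factors and the precise $a$-dependence of the quadratic---so that at third order the cross terms cancel and leave only the clean combination $9H_{2k}^{(3)}-H_k^{(3)}$ together with $125/(2k+1)$; and (ii) establishing that refined identity, since Guillera's $1/\pi^2$ series are not classical hypergeometric evaluations and must be reached through a transformation or a WZ pair. Once the master identity and the bookkeeping of the cross terms are settled, the reduction of the right-hand side via the special values \eqref{polygamma-a}--\eqref{polygamma-c} is routine.
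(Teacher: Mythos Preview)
Your proposal is a strategy outline, not a proof: you explicitly list as ``the main obstacle'' both finding the parametric refinement of \eqref{Guillera-b} and proving it, and those are precisely the content of the theorem. In particular, a single parameter $a$ perturbing only $(\tfrac12)_k$ and $(1)_k$ cannot produce $H_{2k}^{(3)}$ at all---your own decomposition $9H_{2k}^{(3)}-H_k^{(3)}=H_k^{(3)}(-\tfrac12)+H_{2k}^{(3)}$ requires a factor with index $2k$, and none appears in $(\tfrac12+a)_k^5/(1+a)_k^5$. Moreover, a genuine third derivative in one variable unavoidably generates the cubic cross terms $(H_k^{(1)})^3$, $H_k^{(1)}H_k^{(2)}$, etc.; your claim that these ``reorganize into the single rational term $125/(2k+1)$'' via the $a$-deformation of the quadratic is asserted without any mechanism, and in general it is false---such cross terms do not collapse to an elementary rational function of $k$.

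The paper avoids all of this by working with a \emph{multi}-parameter identity from the start. It takes the acceleration transformation of Chu--Zhang (their Theorem~31), specializes $e=a$ and closes the right side with Dougall's ${}_5F_4$ sum \eqref{eq:Dougall}, then applies two \emph{first} derivatives in \emph{different} parameters ($\mathcal{D}_b$ followed by $\mathcal{D}_c$), with an intermediate division by $c-2b$ and a final limit $c\to 1$ handled by L'H\^opital. The factors $(\cdot)_{2k}$ already present in the transformation are what generate $H_{2k}^{(3)}$, and taking first derivatives in separate variables never creates the mixed products $(H^{(1)})^3$ or $H^{(1)}H^{(2)}$, so no miraculous cancellation is needed; the residual $\tfrac{125}{2k+1}$ comes directly from differentiating the explicit rational coefficient $\theta_k$. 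If you want to rescue your approach, you must supply a concrete master identity with $(\cdot)_{2k}$ factors and show the third-order bookkeeping in full; as written, the two ``obstacles'' you name are the proof.
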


The rest of the paper is organized as follows. According to the
operator methods and several summation and transformation formulas
for hypergeometric series, we shall certify Theorems \ref{thm-a} and
\ref{thm-b} in Sections 2. Similarly, Theorems \ref{thm-c} and
\ref{thm-d} will be verified in Section 3.

\section{Proof of Theorems \ref{thm-a} and \ref{thm-b}}
For the aim to prove Theorem \ref{thm-a}, we need the following
lemma.

\begin{lem}\label{lemm-a}
\begin{align}
\sum_{k=1}^{\infty}\frac{(\frac{1}{2})_k}{k(1)_k}\Big\{4H_{2k}^{(2)}-H_{k}^{(2)}\Big\}
=8\pi G-14\zeta(3).
 \label{equation-wei-e}
\end{align}
\end{lem}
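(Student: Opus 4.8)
The plan is to convert the second-order harmonic combination into an elementary integral, sum the resulting power series in closed form via the binomial theorem, and thereby collapse the whole identity to a single definite integral which I then evaluate to $\tfrac74\zeta(3)-\pi G$.

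First I would observe that the bracketed combination is really an odd-square sum: splitting the even-indexed terms of $H_{2k}^{(2)}$ off as $\tfrac14H_k^{(2)}$ gives $4H_{2k}^{(2)}-H_k^{(2)}=4\sum_{j=1}^{k}(2j-1)^{-2}$. Using the integral-operator representation $(2j-1)^{-2}=-\int_0^1 x^{2j-2}\log x\,dx$ together with $\sum_{j=1}^{k}x^{2j-2}=(1-x^{2k})/(1-x^2)$, this becomes
\[
4H_{2k}^{(2)}-H_k^{(2)}=-4\int_0^1\frac{(1-x^{2k})\log x}{1-x^2}\,dx .
\]
Substituting into the left-hand side of \eqref{equation-wei-e} and interchanging summation and integration (legitimate by absolute convergence), the task reduces to summing the two series $\sum_{k\ge1}\frac{(1/2)_k}{k(1)_k}$ and $\sum_{k\ge1}\frac{(1/2)_k}{k(1)_k}x^{2k}$.

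Both are governed by the binomial theorem $\sum_{k\ge0}\frac{(1/2)_k}{(1)_k}z^k=(1-z)^{-1/2}$; the extra factor $1/k$ is produced by the integral operator $\sum_k a_k z^k/k=\int_0^z s^{-1}\sum_k a_k s^k\,ds$, and a short computation yields the closed form $\sum_{k\ge1}\frac{(1/2)_k}{k(1)_k}z^k=2\log\frac{2}{1+\sqrt{1-z}}$. Evaluating at $z=1$ and $z=x^2$ and subtracting makes the bracket collapse to the clean expression $2\log(1+\sqrt{1-x^2})$, so the entire left-hand side equals
\[
-8\int_0^1\frac{\log x\,\log(1+\sqrt{1-x^2})}{1-x^2}\,dx .
\]
Thus everything comes down to proving $\int_0^1\frac{\log x\,\log(1+\sqrt{1-x^2})}{1-x^2}\,dx=\tfrac74\zeta(3)-\pi G$.

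This last integral is the main obstacle. I would evaluate it by the substitution $x=\sin\theta$, rewriting it as $\int_0^{\pi/2}\frac{\log\sin\theta\,\log(1+\cos\theta)}{\cos\theta}\,d\theta$, and then introducing a Feynman parameter $I(a)=\int_0^1\frac{\log x\,\log(1+a\sqrt{1-x^2})}{1-x^2}\,dx$, for which differentiation under the integral sign gives $I'(a)=\int_0^{\pi/2}\frac{\log\sin\theta}{1+a\cos\theta}\,d\theta$. The inner integral can be expressed through dilogarithms in $a$ (the Fourier expansion $\log\sin\theta=-\log2-\sum_{n\ge1}\frac{\cos2n\theta}{n}$ is convenient here), and the final integration over $a\in[0,1]$ then produces $G$ and $\zeta(3)$ via standard polylogarithmic values, with $\zeta(3)$ entering through $\psi_2(1)=-2\zeta_3$ as in \eqref{polygamma-a}. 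The delicate part is the careful bookkeeping of these polylogarithmic contributions so that the $\pi G$ and $\zeta(3)$ terms emerge with the stated coefficients; the reduction steps leading up to it are routine.
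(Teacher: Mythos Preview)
Your reduction is sound: the identity $4H_{2k}^{(2)}-H_k^{(2)}=4\sum_{j\le k}(2j-1)^{-2}$, the integral representation, the closed form $\sum_{k\ge1}\frac{(1/2)_k}{k(1)_k}z^k=2\log\frac{2}{1+\sqrt{1-z}}$, and the collapse to
\[
-8\int_0^1\frac{\log x\,\log(1+\sqrt{1-x^2})}{1-x^2}\,dx
\]
are all correct, and this integral does equal $8\pi G-14\zeta(3)$.

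The paper proceeds quite differently. Rather than expressing the harmonic combination as an integral and summing the elementary binomial series, it quotes Sun's generating function
\[
\sum_{k\ge1}\Big(\tfrac{x}{2}\Big)^{2k}\frac{(1/2)_k}{(1)_k}\bigl\{4H_{2k}^{(2)}-H_k^{(2)}\bigr\}=\frac{4(\arcsin\tfrac{x}{2})^2}{\sqrt{4-x^2}}
\]
directly, divides by $x$ and integrates over $[0,2]$ to produce the factor $1/k$, and lands on the much simpler integral $\int_0^{\pi/2}\tfrac{4t^2}{\sin t}\,dt$. One integration by parts and the Fourier expansion of $\log\tan(t/2)$ then give $8\pi G-14\zeta(3)$ in a few lines. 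Your route is more self-contained---it does not invoke Sun's identity---but you pay for that independence with a substantially harder log--log integral, and your Feynman-parameter evaluation of it remains only a sketch: the inner integral $\int_0^{\pi/2}\frac{\log\sin\theta}{1+a\cos\theta}\,d\theta$ is not elementary, and the subsequent $a$-integration and dilogarithm bookkeeping you allude to are where the real work lies. (The remark that $\zeta(3)$ enters via $\psi_2(1)$ is incidental; in the paper's computation it appears simply as $\sum_{k\ge1}(2k-1)^{-3}=\tfrac{7}{8}\zeta(3)$.) If you want to close the argument cleanly, it is worth noting that after the substitution $x=\sin\theta$ and an integration by parts your integral can in fact be brought back to $\int_0^{\pi/2}\tfrac{t^2}{\sin t}\,dt$, which recovers the paper's short endgame.
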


\begin{proof}
Recall a formula due to Sun \cite[Theorem 1.1]{Sun-c}:
\begin{align*}
\frac{4(\arcsin\frac{x}{2})^2}{\sqrt{4-x^2}}=\sum_{k=1}^{\infty}\bigg(\frac{x}{2}\bigg)^{2k}\frac{(\frac{1}{2})_k}{(1)_k}\Big\{4H_{2k}^{(2)}-H_{k}^{(2)}\Big\}.
\end{align*}
Divide both sides by $x$ to deduce
\begin{align*}
\frac{4(\arcsin\frac{x}{2})^2}{x\sqrt{4-x^2}}=\sum_{k=1}^{\infty}\frac{x^{2k-1}}{4^k}\frac{(\frac{1}{2})_k}{(1)_k}\Big\{4H_{2k}^{(2)}-H_{k}^{(2)}\Big\}.
\end{align*}
Applying the integral operator $\int_{0}^2f(x)dx$ on both sides of
it, there is
\begin{align}
\int_{0}^2\frac{8(\arcsin\frac{x}{2})^2}{x\sqrt{4-x^2}}dx=\sum_{k=1}^{\infty}\frac{(\frac{1}{2})_k}{k(1)_k}\Big\{4H_{2k}^{(2)}-H_{k}^{(2)}\Big\}.
 \label{equation-wei-g}
\end{align}

Noting that
\begin{align*}
\log\bigg(\tan\frac{x}{2}\bigg)=-2\sum_{k=1}^{\infty}\frac{\cos\{(2k-1)x\}}{2k-1},
\end{align*}
we can proceed as follows:
\begin{align}
&\int_{0}^2\frac{8(\arcsin\frac{x}{2})^2}{x\sqrt{4-x^2}}dx
=\int_{0}^{\frac{\pi}{2}}\frac{4t^2}{\sin
t}dt=-8\int_{0}^{\frac{\pi}{2}}t\log\bigg(\tan\frac{t}{2}\bigg)dt
\notag\\
&\quad=16\sum_{k=1}^{\infty}\frac{1}{2k-1}\int_{0}^{\frac{\pi}{2}}t\cos\{(2k-1)t\}dt
\notag\\
&\quad=8\pi\sum_{k=1}^{\infty}\frac{(-1)^{k-1}}{(2k-1)^2}-16\sum_{k=1}^{\infty}\frac{1}{(2k-1)^3}
\notag\\
&\quad=8\pi G-14\zeta(3).
\label{equation-wei-h}
\end{align}
The combination of \eqref{equation-wei-g} and \eqref{equation-wei-h}
produces  \eqref{equation-wei-e}.
\end{proof}

Now we are ready to prove Theorem \ref{thm-a}.

\begin{proof}[Proof of Theorem \ref{thm-a}]
Whipple's transformation formula (cf.\cite[P. 32]{Bailey}) can be
expressed as
\begin{align}
&{_{7}F_{6}}\left[\begin{array}{cccccccc}
  a,1+\frac{a}{2},b,c,d,e,-n\\
  \frac{a}{2},1+a-b,1+a-c,1+a-d,1+a-e,1+a+n
\end{array};1
\right]
\notag\\
&=\frac{(1+a)_n(1+a-d-e)_n}{(1+a-d)_n(1+a-e)_n}
{_{4}F_{3}}\left[\begin{array}{cccccccc}
  1+a-b-c,d,e,-n\\
  1+a-b,1+a-c,d+e-a-n
\end{array};1
\right],
  \label{Whipple}
\end{align}
where the hypergeometric series has been defined by
$$
_{r+1}F_{r}\left[\begin{array}{c}
a_1,a_2,\ldots,a_{r+1}\\
b_1,b_2,\ldots,b_{r}
\end{array};\, z
\right] =\sum_{k=0}^{\infty}\frac{(a_1)_k(a_2)_k\cdots(a_{r})_k}
{(1)_k(b_1)_k\cdots(b_{r})_k}z^k.
$$
The $b=1,d\mapsto d-c$ case of \eqref{Whipple} reads
\begin{align*}
&\sum_{k=0}^{n}\frac{a+2k}{a}\frac{(c)_k(d-c)_k(e)_k(-n)_k}{(1+a-c)_k(1+a+c-d)_k(1+a-e)_k(1+a+n)_k}
\notag\\[1mm]
&\:\:= \frac{(1+a)_n(1+a+c-d-e)_n}{(1+a+c-d)_n(1+a-e)_n}
\sum_{k=0}^{n}\frac{a-c}{a-c+k}\frac{(d-c)_k(e)_k(-n)_k}{(1)_k(a)_k(d+e-a-c-n)_k}.
\end{align*}
Employ the derivative operator  $\mathcal{D}_{c}$ on both sides of
the last equation to get
\begin{align}
&\sum_{k=1}^{n}\frac{a+2k}{a}\frac{(c)_k(d-c)_k(e)_k(-n)_k}{(1+a-c)_k(1+a+c-d)_k(1+a-e)_k(1+a+n)_k}
\notag\\[1mm]
&\quad\times
\big\{H_{k}(c-1)-H_{k}(d-c-1)+H_{k}(a-c)-H_{k}(a+c-d)\big\}
\notag\\[1mm]
&\:\:=
\frac{(1+a)_n(1+a+c-d-e)_n}{(1+a+c-d)_n(1+a-e)_n}\big\{H_{n}(a+c-d-e)-H_{n}(a+c-d)\big\}
\notag\\[1mm]
&\quad\times
\sum_{k=0}^{n}\frac{a-c}{a-c+k}\frac{(d-c)_k(e)_k(-n)_k}{(1)_k(a)_k(d+e-a-c-n)_k}
\notag\\[1mm]
&\quad+
\frac{(1+a)_n(1+a+c-d-e)_n}{(1+a+c-d)_n(1+a-e)_n}\sum_{k=1}^{n}\frac{a-c}{a-c+k}\frac{(d-c)_k(e)_k(-n)_k}{(1)_k(a)_k(d+e-a-c-n)_k}
\notag\\[1mm]
&\quad\times
\bigg\{H_{k}(d+e-a-c-n-1)-H_{k}(d-c-1)-\frac{k}{(a-c)(a-c+k)}\bigg\}.
 \label{Whipple-a}
\end{align}
 The
$a=c=\frac{1}{2}$ case of \eqref{Whipple-a} gives
\begin{align*}
&\sum_{k=0}^{n}(4k+1)\frac{(\frac{1}{2})_k(d-\frac{1}{2})_k(e)_k(-n)_k}{(1)_k(2-d)_k(\frac{3}{2}-e)_k(\frac{3}{2}+n)_k}
\big\{H_{k}(-\tfrac{1}{2})-H_{k}(d-\tfrac{3}{2})+H_{k}-H_{k}(1-d)\big\}
\notag\\[1mm]
&\:\:= \frac{(\frac{3}{2})_n(2-d-e)_n}{(2-d)_n(\frac{3}{2}-e)_n}
\bigg\{H_{n}(1-d-e)-H_{n}(1-d)-\sum_{k=1}^{n}\frac{(d-\frac{1}{2})_k(e)_k(-n)_k}{k(1)_k(\frac{1}{2})_k(d+e-1-n)_k}
\bigg\}.
\end{align*}
Dividing both sides by $1-d$ and replacing $e$ by $\frac{3}{2}-d$,
we have
\begin{align}
&\sum_{k=1}^{n}(4k+1)\frac{(\frac{1}{2})_k(d-\frac{1}{2})_k(\frac{3}{2}-d)_k(-n)_k}{(1)_k(2-d)_k(d)_k(\frac{3}{2}+n)_k}
\bigg\{\sum_{i=1}^{k}\frac{1}{i(1-d+i)}-\sum_{i=1}^{k}\frac{1}{(-\frac{1}{2}+i)(d-\frac{3}{2}+i)}\bigg\}
\notag\\[1mm]
&\:\:= \frac{(\frac{3}{2})_n(\frac{1}{2})_n}{(1-d)_{n+1}(d)_n}
\bigg\{H_{n}(-\tfrac{1}{2})-H_{n}(1-d)-\sum_{k=1}^{n}\frac{(d-\frac{1}{2})_k(\frac{3}{2}-d)_k(-n)_k}{k(1)_k(\frac{1}{2})_k(\frac{1}{2}-n)_k}
\bigg\}.
  \label{Whipple-c}
\end{align}

Apply the derivative operator  $\mathcal{D}_{d}$ on both sides of
\eqref{Whipple-c} to find
\begin{align}
&\sum_{k=1}^{n}(4k+1)\frac{(\frac{1}{2})_k(d-\frac{1}{2})_k(\frac{3}{2}-d)_k(-n)_k}{(1)_k(2-d)_k(d)_k(\frac{3}{2}+n)_k}
\notag\\[1mm]
&\quad\times\bigg\{\big[H_{k}(d-\tfrac{3}{2})-H_{k}(\tfrac{1}{2}-d)+H_{k}(1-d)-H_{k}(d-1)\big]
\notag\\[1mm]
&\qquad\times\bigg[\sum_{i=1}^{k}\frac{1}{i(1-d+i)}-\sum_{i=1}^{k}\frac{1}{(-\frac{1}{2}+i)(d-\frac{3}{2}+i)}\bigg]
\notag\\[1mm]
&\qquad+\bigg[\sum_{i=1}^{k}\frac{1}{i(1-d+i)^2}+\sum_{i=1}^{k}\frac{1}{(-\frac{1}{2}+i)(d-\frac{3}{2}+i)^2}\bigg]
\bigg\}
\notag\\[1mm]
&\:\:=
\frac{(\frac{3}{2})_n(\frac{1}{2})_n}{(1-d)(2-d)_{n}(d)_n}\bigg[\frac{1}{1-d}+H_{n}(1-d)-H_{n}(d-1)\bigg]
\notag\\[1mm]
&\quad\times
\bigg\{H_{n}(-\tfrac{1}{2})-H_{n}(1-d)-\sum_{k=1}^{n}\frac{(d-\frac{1}{2})_k(\tfrac{3}{2}-d)_k(-n)_k}{k(1)_k(\frac{1}{2})_k(\tfrac{1}{2}-n)_k}
\bigg\}
\notag\\[1mm]
&\quad- \frac{(\frac{3}{2})_n(\frac{1}{2})_n}{(1-d)(2-d)_{n}(d)_n}
\notag\\[1mm]
&\quad\times
\bigg\{H_{n}^{(2)}(1-d)+\sum_{k=1}^{n}\frac{(d-\frac{1}{2})_k(\tfrac{3}{2}-d)_k(-n)_k}{k(1)_k(\frac{1}{2})_k(\tfrac{1}{2}-n)_k}
\big[H_{k}(d-\tfrac{3}{2})-H_{k}(\tfrac{1}{2}-d)\big] \bigg\}.
\label{Whipple-d}
\end{align}
The $2a=2c=d$ case of \eqref{Whipple-a} is
\begin{align}
\sum_{k=1}^{n}\frac{(e)_k(-n)_k}{k(1)_k(e-n)_k}= H_{n}(-e)-H_{n}.
 \label{Whipple-e}
\end{align}
Letting $d\to 1$ in \eqref{Whipple-d} and using L'H\^{o}pital rule
and \eqref{Whipple-e}, we arrive at
\begin{align*}
&\sum_{k=1}^{n}(4k+1)\frac{(\frac{1}{2})_k^3(-n)_k}{(1)_k^3(\frac{3}{2}+n)_k}H_{2k}^{(3)}
\notag\\[1mm]
&\:\:= \frac{(\frac{3}{2})_n(\frac{1}{2})_n}{8(1)_{n}^2}
\bigg\{H_{n}^{(3)}-\sum_{k=1}^{n}\frac{(\frac{1}{2})_k(-n)_k}{k(1)_k(\tfrac{1}{2}-n)_k}
\big[4H_{2k}^{(2)}-H_{k}^{(2)}\big]\bigg\}.
\end{align*}
Letting $n\to\infty$  and exploiting Lemma \ref{lemm-a}, we catch
hold of \eqref{equation-wei-a}.
\end{proof}

Subsequently, we shall prove Theorem \ref{thm-b}.

\begin{proof}[Proof of Theorem \ref{thm-b}]
 Remember the following summation formula for hypergeometric series due to Gosper (1977) (cf. \cite[Equation
 (5.1e)]{Chu}):
\begin{align}
&{_7}F_{6}\left[\begin{array}{c}
a-\frac{1}{2},\frac{2a+2}{3},2b-1,2c-1,2+2a-2b-2c,a+n,-n\\[3pt]
\frac{2a-1}{3},1+a-b,1+a-c,b+c-\frac{1}{2},2a+2n,-2n
\end{array};\, 1 \right]
\notag\\[2mm]
&\quad=\frac{(a+\frac{1}{2})_n(b)_n(c)_n(a-b-c+\frac{3}{2})_n}
{(\frac{1}{2})_n(1+a-b)_n(1+a-c)_n(b+c-\frac{1}{2})_n}.
\label{equation-a}
\end{align}
Employ the derivative operator  $\mathcal{D}_{b}$ on both sides of
\eqref{equation-a} to obtain
\begin{align*}
&\sum_{k=1}^n
\frac{(a-\frac{1}{2})_k(\frac{2a+2}{3})_k(2b-1)_k(2c-1)_k(2+2a-2b-2c)_k(a+n)_k(-n)_k}
{(1)_k(\frac{2a-1}{3})_k(1+a-b)_k(1+a-c)_k(b+c-\frac{1}{2})_k(2a+2n)_k(-2n)_k}
\notag\\[2pt]
&\quad\times\Big\{2H_k(2b-2)-2H_k(1+2a-2b-2c)+H_k(a-b)-H_k(b+c-\tfrac{3}{2})\Big\}
\notag\\[2pt]
&=\frac{(a+\frac{1}{2})_n(b)_n(c)_n(a-b-c+\frac{3}{2})_n}
{(\frac{1}{2})_n(1+a-b)_n(1+a-c)_n(b+c-\frac{1}{2})_n}
\notag\\[2pt]
&\quad\times\Big\{H_n(b-1)-H_n(a-b-c+\tfrac{1}{2})+H_n(a-b)-H_n(b+c-\tfrac{3}{2})\Big\}.
\end{align*}
Dividing both sides by $\frac{3}{2}+a-2b-c$, there holds
\begin{align}
&\sum_{k=1}^n
\frac{(a-\frac{1}{2})_k(\frac{2a+2}{3})_k(2b-1)_k(2c-1)_k(2+2a-2b-2c)_k(a+n)_k(-n)_k}
{(1)_k(\frac{2a-1}{3})_k(1+a-b)_k(1+a-c)_k(b+c-\frac{1}{2})_k(2a+2n)_k(-2n)_k}
\notag\\[2pt]
&\quad\times\bigg\{4\sum_{i=1}^k\frac{1}{(2b-2+i)(1+2a-2b-2c+i)}-\sum_{i=1}^k\frac{1}{(a-b+i)(b+c-\tfrac{3}{2}+i)}\bigg\}
\notag\\[2pt]
&=\frac{(a+\frac{1}{2})_n(b)_n(c)_n(a-b-c+\frac{3}{2})_n}
{(\frac{1}{2})_n(1+a-b)_n(1+a-c)_n(b+c-\frac{1}{2})_n}
\notag\\[2pt]
&\quad\times\bigg\{\sum_{i=1}^n\frac{1}{(b-1+i)(a-b-c+\tfrac{1}{2}+i)}-\sum_{i=1}^n\frac{1}{(a-b+i)(b+c-\tfrac{3}{2}+i)}\bigg\}.
\label{equation-b}
\end{align}

Apply the derivative operator  $\mathcal{D}_{c}$ on both sides of
\eqref{equation-b} to conclude
\begin{align*}
&\sum_{k=1}^n
\frac{(a-\frac{1}{2})_k(\frac{2a+2}{3})_k(2b-1)_k(2c-1)_k(2+2a-2b-2c)_k(a+n)_k(-n)_k}
{(1)_k(\frac{2a-1}{3})_k(1+a-b)_k(1+a-c)_k(b+c-\frac{1}{2})_k(2a+2n)_k(-2n)_k}
\notag\\[2pt]
&\quad\times\bigg\{\big[2H_k(2c-2)-2H_k(1+2a-2b-2c)+H_k(a-c)-H_k(b+c-\tfrac{3}{2})\big]
\notag\\[2pt]
&\qquad\times
\bigg[4\sum_{i=1}^k\frac{1}{(2b-2+i)(1+2a-2b-2c+i)}-\sum_{i=1}^k\frac{1}{(a-b+i)(b+c-\tfrac{3}{2}+i)}\bigg]
\end{align*}
\begin{align*}
&\qquad+
\bigg[8\sum_{i=1}^k\frac{1}{(2b-2+i)(1+2a-2b-2c+i)^2}+\sum_{i=1}^k\frac{1}{(a-b+i)(b+c-\tfrac{3}{2}+i)^2}\bigg]\bigg\}
\notag\\[2pt]
&=\frac{(a+\frac{1}{2})_n(b)_n(c)_n(a-b-c+\frac{3}{2})_n}
{(\frac{1}{2})_n(1+a-b)_n(1+a-c)_n(b+c-\frac{1}{2})_n}
\notag\\[2pt]
&\quad\times\bigg\{\Big[H_n(c-1)-H_n(a-b-c+\tfrac{1}{2})+H_n(a-c)-H_n(b+c-\tfrac{3}{2})\Big]
\notag\\[2pt]
&\qquad\times
\bigg[\sum_{i=1}^n\frac{1}{(b-1+i)(a-b-c+\tfrac{1}{2}+i)}-\sum_{i=1}^n\frac{1}{(a-b+i)(b+c-\tfrac{3}{2}+i)}\bigg]
\notag\\[2pt]
&\qquad+
\bigg[\sum_{i=1}^n\frac{1}{(b-1+i)(a-b-c+\tfrac{1}{2}+i)^2}+\sum_{i=1}^n\frac{1}{(a-b+i)(b+c-\tfrac{3}{2}+i)^2}\bigg]\bigg\}.
\end{align*}
The $a=b=c=\frac{3}{4}$ case of it engenders
\begin{align}
&\sum_{k=1}^{n}(6k+1)\frac{(\frac{1}{2})_k^3(\frac{1}{4})_k(\frac{3}{4}+n)_k(-n)_k}{(1)_k^4(\frac{3}{2}+2n)_k(-2n)_k}\Big\{64H_{2k}^{(3)}-7H_{k}^{(3)}\Big\}
\notag\\[1mm]
&\:\:=
\frac{(\frac{3}{4})_n^3(\frac{5}{4})_n}{(1)_{n}^3(\frac{1}{2})_n}
\Big\{H_{n}^{(3)}+H_{n}^{(3)}(-\tfrac{1}{4})\Big\}.
\label{equation-c}
\end{align}
Letting $n\to\infty$ in \eqref{equation-c} and utilizing
\eqref{relation} and \eqref{polygamma-c}, we are led to
\eqref{equation-wei-b}.
\end{proof}

\section{Proof of Theorems \ref{thm-c} and \ref{thm-d}}

In order to prove Theorem \ref{thm-c}, we require Dougall's $_5F_4$
 summation formula (cf. \cite[P. 27]{Bailey}):
\begin{align}
&{_{5}F_{4}}\left[\begin{array}{cccccccc}
  a,1+\frac{a}{2},b,c,d\\
  \frac{a}{2},1+a-b,1+a-c,1+a-d
\end{array};1\right]
\notag\\[1mm]
&\:\:=
\frac{\Gamma(1+a-b)\Gamma(1+a-c)\Gamma(1+a-d)\Gamma(1+a-b-c-d)}{\Gamma(1+a)\Gamma(1+a-b-c)\Gamma(1+a-b-d)\Gamma(1+a-c-d)},
 \label{eq:Dougall}
\end{align}
where  $\mathfrak{R}(1+a-b-c-d)>0$.

Firstly, we begin to prove Theorem \ref{thm-c}.

\begin{proof}[{\bf{Proof of Theorem \ref{thm-c}}}]
A known transformation formula for hypergeometric series (cf.
\cite[Theorem 9]{Chu-b}) can be stated as
\begin{align}
&\sum_{k=0}^{\infty}\frac{(c)_k(d)_k(e)_k(1+a-b-c)_k(1+a-b-d)_{k}(1+a-b-e)_{k}}{(1+a-c)_{k}(1+a-d)_{k}(1+a-e)_{k}(1+2a-b-c-d-e)_{k}}
\notag\\[1mm]
&\quad\times\frac{(-1)^k}{(1+a-b)_{2k}}\alpha_k(a,b,c,d,e) \notag
\end{align}
\begin{align}
&\:=\sum_{k=0}^{\infty}(a+2k)\frac{(b)_k(c)_k(d)_k(e)_k}{(1+a-b)_{k}(1+a-c)_{k}(1+a-d)_{k}(1+a-e)_{k}},
\label{equation-aa}
\end{align}
where  $\mathfrak{R}(1+2a-b-c-d-e)>0$ and
\begin{align*}
\alpha_k(a,b,c,d,e)&=\frac{(1+2a-b-c-d+2k)(a-e+k)}{1+2a-b-c-d-e+k}
\\[1mm]
&\quad+\frac{(1+a-b-c+k)(1+a-b-d+k)(e+k)}{(1+a-b+2k)(1+2a-b-c-d-e+k)}.
\end{align*}
Choosing $b=a$ in \eqref{equation-aa} and evaluating the right
series by \eqref{eq:Dougall}, we discover
\begin{align*}
&\sum_{k=0}^{\infty}\bigg(\frac{-1}{4}\bigg)^k\frac{(c)_k(d)_k(e)_k(1-c)_{k}(1-d)_k(1-e)_{k}}
{(\frac{1}{2})_{k}(1)_{k}(1+a-c)_{k}(1+a-d)_{k}(1+a-e)_{k}(2+a-c-d-e)_{k}}
\notag\\[1mm]
&\quad\times
\bigg\{(1+a-c-d+2k)(a-e+k)+\frac{(1-c+k)(1-d+k)(e+k)}{1+2k}\bigg\}
\notag\\[1mm]
&\:\:=\frac{\Gamma(1+a-c)\Gamma(1+a-d)\Gamma(1+a-e)\Gamma(2+a-c-d-e)}{\Gamma(a)\Gamma(1+a-c-d)\Gamma(1+a-c-e)\Gamma(1+a-d-e)}.
\end{align*}
Perform the substitutions $d\mapsto d-c, e\mapsto e-d$ in the last
equation to arrive at
\begin{align}
&\sum_{k=0}^{\infty}\bigg(\frac{-1}{4}\bigg)^k\frac{(c)_k(d-c)_k(e-d)_k(1-c)_{k}(1+c-d)_k(1+d-e)_{k}\beta_k(a,c,d,e)}
{(\frac{1}{2})_{k}(1)_{k}(1+a-c)_{k}(1+a+c-d)_{k}(1+a+d-e)_{k}(2+a-e)_{k}}
\notag\\[1mm]
&\:\:=\frac{\Gamma(1+a-c)\Gamma(1+a+c-d)\Gamma(1+a+d-e)\Gamma(2+a-e)}{\Gamma(a)\Gamma(1+a-d)\Gamma(1+a+c-e)\Gamma(1+a-c+d-e)},
\label{equation-bb}
\end{align}
where
\begin{align*}
\beta_k(a,c,d,e)&=(1+a-d+2k)(a+d-e+k)
\\[1mm]
&+\frac{(1-c+k)(1+c-d+k)(e-d+k)}{1+2k}.
\end{align*}

Employ the derivative operator  $\mathcal{D}_{c}$ on both sides of
\eqref{equation-bb} to get
\begin{align*}
&\sum_{k=0}^{\infty}\bigg(\frac{-1}{4}\bigg)^k\frac{(c)_k(d-c)_k(e-d)_k(1-c)_{k}(1+c-d)_k(1+d-e)_{k}}
{(\frac{1}{2})_{k}(1)_{k}(1+a-c)_{k}(1+a+c-d)_{k}(1+a+d-e)_{k}(2+a-e)_{k}}
\notag\\[1mm]
&\quad\times
\bigg\{\beta_k(a,c,d,e)\big[H_{k}(c-1)-H_{k}(d-c-1)+H_{k}(c-d)-H_{k}(-c)+H_{k}(a-c)
\notag\\[1mm]
&\qquad -H_{k}(a+c-d)\big]+\frac{(d-2c)(e-d+k)}{1+2k}\bigg\}
\notag\\[1mm]
&\:\:=\frac{\Gamma(1+a-c)\Gamma(1+a+c-d)\Gamma(1+a+d-e)\Gamma(2+a-e)}{\Gamma(a)\Gamma(1+a-d)\Gamma(1+a+c-e)\Gamma(1+a-c+d-e)}
\notag\\[1mm]
&\quad\times
\big\{\psi(1+a+c-d)-\psi(1+a-c)+\psi(1+a-c+d-e)-\psi(1+a+c-e)\big\}.
\end{align*}
Dividing both sides by $d-2c$ and  then fixing
$a=c=e-1=\frac{1}{2}$, we have
\begin{align}
&\sum_{k=0}^{\infty}\bigg(\frac{-1}{4}\bigg)^k\frac{(\frac{1}{2})_k(d-\frac{1}{2})_k^2(\frac{3}{2}-d)_k^2}
{(1)_{k}^3(d)_{k}(2-d)_{k}}\bigg\{\frac{\frac{3}{2}-d+k}{1+2k}+\frac{20k^2+8k-4d^2+8d-3}{8}
\notag\\[1mm]
&\quad\times
\bigg[\sum_{i=1}^k\frac{1}{(-\frac{1}{2}+i)(d-\frac{3}{2}+i)}+\sum_{i=1}^k\frac{1}{(-\frac{1}{2}+i)(\frac{1}{2}-d+i)}
-\sum_{i=1}^k\frac{1}{i(1-d+i)}\bigg]\bigg\}
\notag\\[1mm]
&\:\:=\frac{\Gamma(d)\Gamma(2-d)}{\Gamma(d-\frac{1}{2})\Gamma(\frac{3}{2}-d)}
\frac{\psi(2-d)-\psi(1)+\psi(d-\frac{1}{2})-\psi(\frac{1}{2})}{\pi(d-1)}.
\label{equation-cc}
\end{align}
Applying the derivative operator  $\mathcal{D}_{d}$ on both sides of
\eqref{equation-cc} and then letting $d\to 1$, we catch hold of
\eqref{equation-wei-c} after using  L'H\^{o}pital rule and
\eqref{polygamma-a} and \eqref{polygamma-b}.
\end{proof}

Secondly, we start to prove Theorem \ref{thm-d}.

\begin{proof}[{\bf{Proof of Theorem \ref{thm-d}}}]
A known transformation formula for hypergeometric series (cf.
\cite[Theorem 31]{Chu-b}) can be written as
\begin{align}
&\sum_{k=0}^{\infty}(-1)^k\frac{(b)_k(c)_k(d)_k(e)_k(1+a-b-c)_k(1+a-b-d)_{k}(1+a-b-e)_{k}}{(1+a-b)_{2k}(1+a-c)_{2k}(1+a-d)_{2k}(1+a-e)_{2k}}
\notag\\[1mm]
&\quad\times\frac{(1+a-c-d)_k(1+a-c-e)_{k}(1+a-d-e)_{k}}{(1+2a-b-c-d-e)_{2k}}\mu_k(a,b,c,d,e)
\notag\\[1mm]
&\:=\sum_{k=0}^{\infty}(a+2k)\frac{(b)_k(c)_k(d)_k(e)_k}{(1+a-b)_{k}(1+a-c)_{k}(1+a-d)_{k}(1+a-e)_{k}},
\label{equation-dd}
\end{align}
where $\mathfrak{R}(1+2a-b-c-d-e)>0$ and
\begin{align*}
&\mu_k(a,b,c,d,e)\\[1mm]
&\:=\frac{(1+2a-b-c-d+3k)(a-e+2k)}{1+2a-b-c-d-e+2k}+\frac{(e+k)(1+a-b-c+k)}{(1+a-b+2k)(1+a-d+2k)}
\\[1mm]
&\quad\times\frac{(1+a-b-d+k)(1+a-c-d+k)(2+2a-b-d-e+3k)}{(1+2a-b-c-d-e+2k)(2+2a-b-c-d-e+2k)}
\\[1mm]
&\:+\frac{(c+k)(e+k)(1+a-b-c+k)(1+a-b-d+k)}{(1+a-b+2k)(1+a-c+2k)(1+a-d+2k)(1+a-e+2k)}
\\[1mm]
&\quad\times\frac{(1+a-b-e+k)(1+a-c-d+k)(1+a-d-e+k)}{(1+2a-b-c-d-e+2k)(2+2a-b-c-d-e+2k)}.
\end{align*}
Setting $e=a$ in \eqref{equation-dd} and calculating the right
series by \eqref{eq:Dougall}, we find
\begin{align*}
&\sum_{k=0}^{\infty}(-1)^k\frac{(a)_k(b)_k(c)_k(d)_k(1-b)_k(1-c)_{k}(1-d)_{k}}{(1)_{2k}(1+a-b)_{2k}(1+a-c)_{2k}(1+a-d)_{2k}}
\end{align*}
\begin{align*}
&\quad\times\frac{(1+a-b-c)_k(1+a-b-d)_{k}(1+a-c-d)_{k}}{(2+a-b-c-d)_{2k}}\nu_k(a,b,c,d)
\notag\\[1mm]
&\:=\frac{\Gamma(1+a-b)\Gamma(1+a-c)\Gamma(1+a-d)\Gamma(2+a-b-c-d)}{\Gamma(1+a)\Gamma(1+a-b-c)\Gamma(1+a-b-d)\Gamma(1+a-c-d)},
\end{align*}
where
\begin{align*}
&\nu_k(a,b,c,d)\\[1mm]
&\:=\frac{2k(1+2a-b-c-d+3k)}{a}+\frac{(a+k)(1+a-b-c+k)}{a(1+a-b+2k)}
\\[1mm]
&\quad\times\frac{(1+a-b-d+k)(1+a-c-d+k)(2+a-b-d+3k)}{(1+a-d+2k)(2+a-b-c-d+2k)}
\\[1mm]
&\:+\frac{(a+k)(c+k)(1-b+k)(1-d+k)}{a(1+2k)(1+a-b+2k)(1+a-c+2k)}
\\[1mm]
&\quad\times\frac{(1+a-b-c+k)(1+a-b-d+k)(1+a-c-d+k)}{(1+a-d+2k)(2+a-b-c-d+2k)}.
\end{align*}
Perform the replacements $c\mapsto c-b, d\mapsto d-c$ in the last
equation to obtain
\begin{align}
&\sum_{k=0}^{\infty}(-1)^k\frac{(a)_k(b)_k(c-b)_k(d-c)_k(1-b)_k(1+b-c)_{k}(1+c-d)_{k}}{(1)_{2k}(1+a-b)_{2k}(1+a+b-c)_{2k}(1+a+c-d)_{2k}}
\notag\\[1mm]
&\quad\times\frac{(1+a-c)_k(1+a-b+c-d)_{k}(1+a+b-d)_{k}}{(2+a-d)_{2k}}\theta_k(a,b,c,d)
\notag\\[1mm]
&\:=\frac{\Gamma(1+a-b)\Gamma(1+a+b-c)\Gamma(1+a+c-d)\Gamma(2+a-d)}{\Gamma(1+a)\Gamma(1+a-c)\Gamma(1+a-b+c-d)\Gamma(1+a+b-d)},
\label{equation-ee}
\end{align}
where
\begin{align*}
&\theta_k(a,b,c,d)\\[1mm]
&\:=\frac{2k(1+2a-d+3k)}{a}+\frac{(a+k)(1+a-c+k)}{a(1+a-b+2k)}
\\[1mm]
&\quad\times\frac{(1+a-b+c-d+k)(1+a+b-d+k)(2+a-b+c-d+3k)}{(1+a+c-d+2k)(2+a-d+2k)}
\\[1mm]
&\:+\frac{(a+k)(c-b+k)(1-b+k)(1+c-d+k)}{a(1+2k)(1+a-b+2k)(1+a+b-c+2k)}
\\[1mm]
&\quad\times\frac{(1+a-c+k)(1+a-b+c-d+k)(1+a+b-d+k)}{(1+a+c-d+2k)(2+a-d+2k)}.
\end{align*}

Employ the derivative operator  $\mathcal{D}_{b}$ on both sides of
\eqref{equation-ee} to deduce
\begin{align*}
&\sum_{k=0}^{\infty}(-1)^k\frac{(a)_k(b)_k(c-b)_k(d-c)_k(1-b)_k(1+b-c)_{k}(1+c-d)_{k}}{(1)_{2k}(1+a-b)_{2k}(1+a+b-c)_{2k}(1+a+c-d)_{2k}}
\notag\\[1mm]
&\quad\times\frac{(1+a-c)_k(1+a-b+c-d)_{k}(1+a+b-d)_{k}}{(2+a-d)_{2k}}\times
\big\{\theta_k(a,b,c,d)
\notag\\[1mm]
&\quad\times\big[H_{k}(b-1)-H_{k}(c-b-1)+H_{k}(b-c)-H_{k}(-b)+H_{k}(a+b-d)
\notag\\[1mm]
&\qquad
-H_{k}(a-b+c-d)+H_{2k}(a-b)-H_{2k}(a+b-c)\big]+\mathcal{D}_{b}\theta_k(a,b,c,d)\big\}
\notag\\[1mm]
&\:=\frac{\Gamma(1+a-b)\Gamma(1+a+b-c)\Gamma(1+a+c-d)\Gamma(2+a-d)}{\Gamma(1+a)\Gamma(1+a-c)\Gamma(1+a-b+c-d)\Gamma(1+a+b-d)}
\notag\\[1mm]
&\quad\times
\big\{\psi(1+a+b-c)-\psi(1+a-b)+\psi(1+a-b+c-d)-\psi(1+a+b-d)\big\}.
\end{align*}
Dividing both sides by $c-2b$ and  then taking
$a=b=d-1=\frac{1}{2}$, we conclude
\begin{align}
&\sum_{k=0}^{\infty}(-1)^k\frac{(\frac{1}{2})_k^4(c-\frac{1}{2})_k^3(\frac{3}{2}-c)_k^3}
{(1)_{2k}^3(c)_{2k}(2-c)_{2k}}\bigg\{\lambda_k(c)+\omega_k(c)
\notag\\[1mm]
&\quad\times
\bigg[2\sum_{i=1}^k\frac{1}{(-\frac{1}{2}+i)(c-\frac{3}{2}+i)}-\sum_{i=1}^k\frac{1}{(-\frac{1}{2}+i)(\frac{1}{2}-c+i)}
-\sum_{i=1}^{2k}\frac{1}{i(1-c+i)}\bigg]\bigg\}
\notag\\[1mm]
&\:\:=\frac{2\,\Gamma(c)\Gamma(2-c)}{\Gamma(c-\frac{1}{2})\Gamma(\frac{3}{2}-c)}
\frac{\psi(2-c)-\psi(1)+\psi(c-\frac{1}{2})-\psi(\frac{1}{2})}{\pi(c-1)},
\label{equation-ff}
\end{align}
where
\begin{align*}
\lambda_k(c)&=2k(1+6k)+\frac{(3-2c+2k)(2c-1+2k)(1+2c+6k)}{16(c+2k)}
\\[1mm]
&\quad+\frac{(3-2c+2k)(2c-1+2k)^3}{64(c+2k)(2-c+2k)},
\\[1mm]
 \omega_k(c)&=\frac{(3-2c+2k)^2(19+44c-20c^2+164k+8ck+172k^2)}{64(1+2k)(c+2k)(2-c+2k)^2}.
\end{align*}
Applying the derivative operator  $\mathcal{D}_{c}$ on both sides of
\eqref{equation-ff} and then letting $c\to 1$, we are led to
\eqref{equation-wei-d} after utilizing L'H\^{o}pital rule and
\eqref{polygamma-a} and \eqref{polygamma-b}.
\end{proof}

\textbf{Acknowledgments}\\

The work is supported by  the National Natural Science Foundation of
China (No. 12071103).


\end{document}